\newtheorem{theorem}{Theorem}[section]
\newtheorem{definition}{Definition}[section]
\begin{document}

\preprint{AIP/123-QED}

\title{An averaging principle for fractional stochastic differential equations  with  L\'{e}vy noise}

\author{Wenjing Xu}
 \altaffiliation{xuwenjing9121@163.com.}
 \affiliation{School of Science, Northwestern Polytechnical University, Xi'an 710129, China
}%

\author{Jinqiao Duan}
 \altaffiliation{duan@iit.edu}
\affiliation{Department of Applied Mathematics, Illinois Institute of
Technology, Chicago 60616, USA
}%

\author{Wei Xu}%
 \altaffiliation{Author to whom correspondence should be addressed. Electronic mail: weixu@nwpu.edu.cn}
 \affiliation{School of Science, Northwestern Polytechnical University, Xi'an 710129, China
}%

\date{\today}

\begin{abstract}
This paper is devoted to the study of an averaging principle for fractional stochastic differential equations in $\mathbb{R}^{n}$ with L\'{e}vy motion, using an integral transform method.
We obtain a time-averaged equation under suitable assumptions.
Furthermore, we show that the solutions of averaged equation   approach the solutions of  the original equation.
Our results  in this paper provide  better understanding  for effective approximation of fractional dynamical systems with non-Gaussian L\'{e}vy noise.

\end{abstract}

\keywords{Stochastic averaging principle, Fractional order systems, L\'{e}vy noise}

\maketitle

\begin{quotation}
Fractional stochastic differential equations are alternative models for anomalous dynamics  in  various complex systems under non-Gaussian random fluctuations.
Although stochastic averaging methods are widely used to gain macroscopic dynamics for multiscale  stochastic differential equations, it is still a great challenge to derive effective models to approximate \emph{fractional} stochastic differential equations, letting alone the systems with non-Gaussian L\'{e}vy motion.
We take up the challenge to study a fractional averaging principle for a dynamical system with L\'{e}vy motion and provide a theoretical foundation of fractional stochastic averaging methods.
This offers a reduced yet effective way to accurately predict the solution paths of fractional stochastic systems with L\'{e}vy motion, under suitable conditions.


\end{quotation}

\section{Introduction}
This paper is devoted to the averaging theory of Caputo type fractional stochastic equations with L\'{e}vy motion, looking for an averaged equation in the mean square convergence sense.
The theory of averaging are indispensable \cite{fr2, bo1, fm221} that they provide evidences or justifications for the averaging procedures of complex equations arising from mathematics, control, engineering mechanics and several problems.
These theoretical results play a crucial role in investigating perturbation theory and  nonlinear dynamical systems during their long history.

Stochastic averaging principles, as a kind of effective analysis tool, are presented to help us approach stochastic differential equations (SDEs) with various different noises, such as multiplicative noise \cite{cs3}, Poisson noise \cite{zwq4, zwq5}, fractional Brownian motion \cite{xy6, xy7}, general stochastic measure\cite{vr8} and the like. L\'{e}vy noise is an important non-Gaussian noise, and the enthusiasm of researchers are growing in this filed. Recently, Xu in a literature \cite{xy9} gave the integer averaging principle with L\'{e}vy noise. However, until now, the fractional averaging behavior with L\'{e}vy motion is not well understood.

Fractional derivative, which can characterize the memory and hereditary properties of various practical dynamical systems, has been widespread concerned \cite{kv91, kv92, kv93}.
However, most scholars prefer integer order derivative to fractional derivative in the concrete research process, because there is no rigorous mathematic tool.
With the in-depth research, they gradually find that fractional averaging methods can more easily help us simplify and obtain approximate solutions to fractional nonclassical dynamical systems.
Roughly speaking, the theory of fractional averaging provides us ample opportunity to accurately reveal the essence of real life \cite{sc141}.

Fractional averaging principle is in the innovation phase.
In our previous papers \cite{xwj10, xwj11}, by analyses of solutions before and after averaging, we have proved that averaging principles are satisfied for both Caputo fractional stochastic differential equations with Brownian motion and fractional neutral equations with Poisson jumps. Here, we consider the following fractional stochastic equations,
\begin{equation}\label{1}
\left\{
   \begin{aligned}
 D_{t}^{\beta}X\left(t\right) &=b\left(t,X\left(t\right)\right)+\sigma\left(t,X\left(t\right)\right)\frac{dL\left(t\right)}{dt},\\
 X\left(0\right)  &=X_{0},
   \end{aligned}
\right.
\end{equation}
where $D_{t}^{\beta}$ is the Caputo fractional derivative, $\beta\in(\frac{1}{2},1)$, initial value $\mathbb{E}\left| X_{0}\right|^{2}<\infty$,
functions $b:\left[0, T\right]\times \mathbb{R}^{n}\rightarrow\mathbb{R}^{n}$ and   $\sigma:\left[0, T\right]\times \mathbb{R}^{n}\rightarrow\mathbb{R}^{n\times r}$ are measurable, $L\left(t\right)$ is a $r$-dimensional L\'{e}vy motion.
The above type of equations are of significant importance in applications \cite{rk160}, which appear in many problems, such as high variability and long memory signal models \cite{sm161}, subsurface solute transport \cite{ls16, lh160}, infrared remote sensing \cite{ha162}, among others.
They are the ones which attract so much attention since the beginning.
We consider in this paper is the averaging character of these substantial interesting equations.

We would like to highlight the fact that our work here is motivated by Xu et al \cite{xy9},  who studied the averaging principle with L\'{e}vy noise.
In this article, we shall generalize the classical Khasminskii averaging approach to Caputo type fractional stochastic equations with L\'{e}vy motion.


We first recall some essential definitions and
existing results (Section 2).  Then we present the fractional averaging results, establishing a stochastic averaging principle for Caputo type fractional equations with L\'{e}vy motion (Section 3). Finally, the example is discussed to illustrate the main results (Section 4), and conclusion is given (Section 5).

\section{Definitions and existing results}
\label{sec.2}

Before researching our averaging results, let us give some basic concepts about fractional calculus and L\'{e}vy motion.

\begin{definition}\cite{kilb}
{\rm Let $f$ be a Lebesgue integrable function, $\frac{1}{2}<\beta<1$, the $\beta$-order integral is defined by
\begin{equation*}
I^{\beta}f(t)=\frac{1}{\Gamma(\beta)}\int^{t}_{t_{0}}(t-s)^{\beta-1}f(s)ds,~ t\in[t_{0},\infty),
\end{equation*}
where $\Gamma$ is the gamma function.}
\end{definition}

\begin{definition}\cite{kilb}
{\rm Let $\frac{1}{2}<\beta<1$, the $\beta$-order Caputo derivative for the function $f$ is
\begin{equation*}
D^{\beta}_{t}f\left(t\right)=\frac{1}{\Gamma(1-\beta)}\int^{t}_{t_{0}}\frac{f'(s)}{(t-s)^{\beta}}ds,~ t\in[t_{0},\infty).
\end{equation*}
}
\end{definition}

\begin{theorem}\cite{ad13, djq14}
{\rm
If L\'{e}vy motion $L\left(t\right)$ is in $\mathbb{R}^{r}$, then the expression
\begin{equation*}
L\left(t\right)=mt+B(t)+\int_{\mid x\mid<c}x\widetilde{N}(t,dx)++\int_{\mid x\mid\geq c}xN(t,dx)
\end{equation*}
is called the L\'{e}vy-It\^{o} decomposition, where vector $m\in\mathbb{R}^{r}$, constant $c>0$, $r$-dimensional Brownian motion $B(t)$ has the covariance matrix $A$, $N(t,dx): \mathbb{R}^{+}\times \{\mathbb{R}^{r}-0\}$ controlling small jumps, is Poisson random measure, $\widetilde{N}(t,dx)=N(t,dx)-t\nu(dx)$ controlling large jumps, is compensated Poisson random measure, $\nu$ is the jump measure.
}
\end{theorem}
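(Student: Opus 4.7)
The plan is to decompose $L(t)$ by isolating its jumps, separating them according to size, and then identifying the remainder as a Gaussian component with drift. First I would construct the jump counting measure: for every Borel set $A \subset \mathbb{R}^{r}\setminus\{0\}$ bounded away from the origin, set
\[
N(t,A) = \#\{0 < s \leq t : \Delta L(s) \in A\},
\]
where $\Delta L(s) = L(s) - L(s-)$. Exploiting the stationary and independent increments of $L$ together with the c\`adl\`ag property, I would verify that $N$ is a Poisson random measure on $[0,\infty) \times (\mathbb{R}^{r}\setminus\{0\})$ whose intensity has the product form $dt \otimes \nu(dx)$, and that $\nu$ (the L\'evy measure) satisfies the integrability condition $\int_{\mathbb{R}^{r}}(|x|^{2}\wedge 1)\,\nu(dx) < \infty$.

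Next I would split the jump contribution at the threshold $c$. Since $\nu(\{|x|\geq c\}) < \infty$, the large-jump process $\int_{|x|\geq c} x\,N(t,dx) = \sum_{s\leq t} \Delta L(s)\mathbf{1}_{\{|\Delta L(s)|\geq c\}}$ is a well-defined compound Poisson process (only finitely many terms on any bounded interval). For the small jumps, which may accumulate, compensation is necessary. I would introduce the truncated integrals
\[
M_{\varepsilon}(t) = \int_{\varepsilon \leq |x| < c} x\,\widetilde{N}(t,dx),
\]
show via the isometry $\mathbb{E}|M_{\varepsilon}(t) - M_{\varepsilon'}(t)|^{2} = t\int_{\varepsilon'\leq|x|<\varepsilon}|x|^{2}\,\nu(dx)$ that $\{M_{\varepsilon}\}_{\varepsilon>0}$ is Cauchy in $L^{2}$ (using $\int_{|x|<c}|x|^{2}\nu(dx)<\infty$), and invoke Doob's maximal inequality to pass to a c\`adl\`ag $L^{2}$-martingale limit $\int_{|x|<c} x\,\widetilde{N}(t,dx)$.

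Subtracting both jump contributions leaves
\[
C(t) = L(t) - \int_{|x|<c} x\,\widetilde{N}(t,dx) - \int_{|x|\geq c} x\,N(t,dx),
\]
which I would argue is a continuous L\'evy process independent of the jump part. The independence follows from the disjoint support (in the L\'evy--Khintchine exponent) of the Gaussian and Poisson contributions, formalized through the exponential formula for Poisson random measures. A continuous L\'evy process is characterized (via Kolmogorov's continuity criterion and L\'evy's characterization of Brownian motion) as $C(t) = mt + B(t)$ with $m \in \mathbb{R}^{r}$ and $B(t)$ an $r$-dimensional Brownian motion with covariance $A$. Combining these identifications yields the asserted decomposition.

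The main obstacle is the middle step: ensuring that the compensated small-jump integral is well defined as an $L^{2}$-limit and is independent of the continuous Gaussian part. This requires the full Poisson stochastic integration framework (isometry formula, martingale convergence, and Doob's inequality) and a careful analysis of the characteristic function of $L(t)$ to disentangle the Gaussian and jump components.
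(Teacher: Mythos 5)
The paper does not prove this statement at all: it is quoted as a classical result with citations to Applebaum and to Duan's textbook, so there is no in-paper argument to compare against. Your outline is, in essence, the canonical proof found in those very references: build the jump counting measure $N$ from $\Delta L(s)=L(s)-L(s-)$, verify via stationary independent increments that it is Poisson with intensity $dt\otimes\nu(dx)$ and that $\nu$ integrates $|x|^{2}\wedge 1$, treat the large jumps as a compound Poisson process, obtain the compensated small-jump integral as an $L^{2}$-martingale limit through the isometry and Doob's inequality, and identify the continuous remainder as $mt+B(t)$ after establishing independence from the jump part. The architecture is correct and the genuinely delicate steps are the ones you flag. Two small cautions: the identification of the continuous remainder as Gaussian is not really a consequence of Kolmogorov's continuity criterion --- the standard route is to show that a continuous L\'evy process has infinitely divisible marginals with no jump part in the L\'evy--Khintchine exponent (or to use the martingale characterization after centering), and this step deserves more than a parenthetical. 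Also note that the statement as printed in the paper swaps the labels: $N(t,dx)$ handles the \emph{large} jumps and the compensated measure $\widetilde{N}$ the \emph{small} ones (and there is a stray ``$++$''), so what you are proving is the corrected version of the displayed formula, which is what your decomposition actually delivers.
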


Using the above theorem, let's rewrite equation (\ref{1}) and give a more general representation
\begin{equation*}
\left\{
   \begin{aligned}
 D_{t}^{\beta}X\left(t\right) =&f\left(t,X\left(t-\right)\right)+G\left(t,X\left(t-\right)\right)\frac{dB\left(t\right)}{dt}+
\frac{1}{dt}\int_{\mid x\mid<c}H\left(t,X\left(t-\right),x\right)\widetilde{N}(dt,dx)\\
   &+\frac{1}{dt}\int_{\mid x\mid\geq c}Q\left(t,X\left(t-\right),x\right)N(dt,dx),\\
 X\left(0\right)  =&X_{0},
   \end{aligned}
\right.
\end{equation*}
where functions $f:\left[0, T\right]\times \mathbb{R}^{n}\rightarrow\mathbb{R}^{n}$,  $G:\left[0, T\right]\times \mathbb{R}^{n}\rightarrow\mathbb{R}^{n\times r}$,
$H:\left[0, T\right]\times \mathbb{R}^{n}\times \mathbb{R}^{n}
\rightarrow\mathbb{R}^{n}$
and
$Q:\left[0, T\right]\times \mathbb{R}^{n}\times \mathbb{R}^{n}
\rightarrow\mathbb{R}^{n}$ are measurable.
According to the technique presented in the literature \cite{ad13}, we set our sights on the L\'{e}vy motion without large jumps,
\begin{equation}\label{2}
\left\{
   \begin{aligned}
 D_{t}^{\beta}X\left(t\right) =&f\left(t,X\left(t-\right)\right)+G\left(t,X\left(t-\right)\right)\frac{dB\left(t\right)}{dt}+\\
&\frac{1}{dt}\int_{\mid x\mid<c}H\left(t,X\left(t-\right),x\right)\widetilde{N}(dt,dx),\\
 X\left(0\right)  =&X_{0}.
   \end{aligned}
\right.
\end{equation}

Let us make two assumptions on functions $f$, $G$ and $H$,
\begin{enumerate}
\item[$\left(H_{1}\right)$]\label{a} \ Let $x_{1},x_{2}\in\mathbb{R}^{n}$, $t\in\left[0, T\right]$ and  constant $C_{1}>0$. Then
   \begin{equation*}
    \begin{split}
       &\left| f\left(t,x_{1}\right)-f\left(t,x_{2}\right) \right|^{2} \vee
        \| a\left(t,x_{1},x_{1}\right)-2a\left(t,x_{1},x_{2}\right)+a\left(t,x_{2},x_{2}\right) \| \vee  \\
       &\int_{\mid x\mid<c}\left|H\left(t,x_{1},x\right)-H\left(t,x_{2},x\right)\right|^{2}\nu(dx)
       \leq C_{1}\left| x_{1}-x_{2}\right|^{2},
    \end{split}
    \end{equation*}
where $\mid\cdot\mid$ is $\mathbb{R}^{n}$-form, $\|\cdot\|$ is matrix form, $x_{1} \vee x_{2}=max\{x_{1}, x_{2}\}$ and  $a\left(t,x_{1},x_{2}\right)=G(t,x_{1})G(t,x_{2})'$ is a $n\times n$ matrix.

    \
\end{enumerate}

\begin{enumerate}
\item[$\left(H_{2}\right)$]\label{b} \  Let $x_{1}\in\mathbb{R}^{n}$, $t\in\left[0, T\right]$ and constant $C_{2}>0$. Then
    \begin{equation*}
       \left| f\left(t,x_{1}\right) \right|^{2} \vee
       \left\| a\left(t,x_{1},x_{1}\right) \right\|  \vee
       \int_{\mid x\mid<c}\left|H\left(t,x_{1},x\right)\right|^{2}\nu(dx)
       \leq C_{2}(1+\left| x_{1}\right|^{2}).
    \end{equation*}

    \
\end{enumerate}

Conduct assumptions $\left(H_{1}\right)$ and $\left(H_{2}\right)$, equation (\ref{2}) have the unique, adapted and cadlag mild solution
\begin{equation}\label{3}
\begin{split}
X\left(t\right)=&X_{0}+\frac{1}{\Gamma(\beta)}\int^{t}_{0}\left(t-s\right)^{\beta-1}f\left(s,X\left(s-\right)\right)ds+\\
                &\frac{1}{\Gamma(\beta)}\int^{t}_{0}\left(t-s\right)^{\beta-1}G\left(s,X\left(s-\right)\right)dB(s)+\\
                &\frac{1}{\Gamma(\beta)}\int^{t}_{0}\left(t-s\right)^{\beta-1}
                \int_{\mid x\mid<c}H\left(s,X\left(s-\right),x\right)\widetilde{N}(ds,dx),
\end{split}
\end{equation}
where $\mathbb{E}(  \int^{T}_{0} \left|X\left(t\right)\right|^{2} dt )<\infty$.
See \cite{xy9, kilb, ad13, djq14, zxm15} for more details.

\section{Averaging principle for fractional stochastic equations}

In previous sections, all tools needed for the averaging problem of Caputo type fractional stochastic equations with L\'{e}vy noise have been prepared.
Start this section by giving the standard form of equation (\ref{2}):
\begin{equation}\label{4}
\left\{
   \begin{aligned}
 D_{t}^{\beta}X_{\epsilon}\left(t\right) =
  &\epsilon f\left(t,X_{\epsilon}\left(t-\right)\right)+
    \sqrt{\epsilon}G\left(t,X_{\epsilon}\left(t-\right)\right)\frac{dB\left(t\right)}{dt}+\\
  &\frac{\sqrt{\epsilon}}{dt}\int_{\mid x\mid<c}H\left(t,X_{\epsilon}\left(t-\right),x\right)\widetilde{N}(dt,dx),\\
 X_{\epsilon}\left(0\right)  =&X_{0},
   \end{aligned}
\right.
\end{equation}
which is obtained by some time scale transformations. Here, $\epsilon<<1$ is a small positive parameter in $\left(0, \epsilon_{0}\right]$.

Taking the average of functions $f$, $G$, $H$ with respect to $t$, we are going to show that the solutions
of equation (\ref{4}) can be approached by the solutions of time-averaged equation
\begin{equation}\label{5}
\left\{
   \begin{aligned}
 D_{t}^{\beta}Z_{\epsilon}\left(t\right) =
  &\epsilon \overline{f}\left(Z_{\epsilon}\left(t-\right)\right)+
    \sqrt{\epsilon} ~\overline{G}\left(Z_{\epsilon}\left(t-\right)\right)\frac{dB\left(t\right)}{dt}+\\
  &\frac{\sqrt{\epsilon}}{dt}\int_{\mid x\mid<c}\overline{H}\left(Z_{\epsilon}\left(t-\right),x\right)\widetilde{N}(dt,dx),\\
 Z_{\epsilon}\left(0\right)  =&X_{0},
   \end{aligned}
\right.
\end{equation}
where functions $\overline{f}$, $\overline{G}$, $\overline{H}$ satisfying
\begin{enumerate}
\item[$\left(H_{3}\right)$]\label{c} \ Let $x_{1}\in \mathbb{R}^{n}$, $T_{1}\in\left[0,T\right]$ and $\alpha_{i}\left(T_{1}\right)>0,~ i=1, 2, 3$.
Then
\begin{equation*}
\left| f\left(T_{1}, x_{1}\right)-\overline{f}\left(x_{1}\right)\right|
 \leq \alpha_{1}\left(T_{1}\right)\left(1+\left| x_{1}\right| \right),
\end{equation*}

\begin{equation*}
\left\|a\left(T_{1},x_{1},x_{1}\right)-\overline{a}\left(x_{1},x_{1}\right)\right\|
 \leq \alpha_{2}\left(T_{1}\right)\left(1+\left| x_{1}\right|^{2} \right),\\
\end{equation*}
and \\
\begin{equation*}
\int_{\mid x\mid<c}\left|H\left(T_{1},x_{1},x\right)-H\left(x_{1},x\right)\right|^{2}\nu(dx)
 \leq \alpha_{3}\left(T_{1}\right)\left(1+\left| x_{1}\right|^{2} \right),
\end{equation*}
where $\mathop {\lim }\limits_{{T_1} \to \infty } {\alpha _i}\left({T_1}\right) = 0$.

 \
\end{enumerate}

%

\begin{theorem}
{\rm
Let $\delta_{1} > 0$ and suppose assumptions $\left(H_{1}\right)$-$\left(H_{3}\right)$ hold for functions $f$, $G$ and $H$. Then there exist three constants $L > 0$, $\epsilon_{1}\in\left(0,\epsilon_{0}\right]$ and $\lambda\in \left(0, 1\right)$ such that
\begin{equation*}
\mathbb{E}( \underset{t\in [0, L\epsilon^{-\lambda}]}{\mathop{\sup }}\ \left| X_{\epsilon}\left(t\right)-Z_{\epsilon}\left(t\right)\right|^{2} )\leq\delta_{1}
\end{equation*}
for all $\epsilon\in\left(0,\epsilon_{1}\right]$.
}
\end{theorem}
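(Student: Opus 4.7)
The plan is to work with the mild solution representation (\ref{3}) for both $X_{\epsilon}$ and $Z_{\epsilon}$, subtract, and estimate $\mathbb{E}\bigl(\sup_{s\le t}|X_{\epsilon}(s)-Z_{\epsilon}(s)|^{2}\bigr)$ on a growing interval $[0,u]$ with $u=L\epsilon^{-\lambda}$. After taking the difference of the two mild formulations, the error splits into three stochastic integrals — a drift piece weighted by $(t-s)^{\beta-1}\epsilon$, a Brownian piece weighted by $(t-s)^{\beta-1}\sqrt{\epsilon}$, and a compensated Poisson piece weighted by $(t-s)^{\beta-1}\sqrt{\epsilon}$ — so I would apply $|a+b+c|^{2}\le 3(|a|^{2}+|b|^{2}+|c|^{2})$ and handle each separately.

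For each of the three terms I would add and subtract the ``frozen-coefficient'' averaged integrand, writing, e.g., $f(s,X_{\epsilon})-\overline{f}(Z_{\epsilon})=[f(s,X_{\epsilon})-f(s,Z_{\epsilon})]+[f(s,Z_{\epsilon})-\overline{f}(Z_{\epsilon})]$. The first bracket is controlled by the Lipschitz bound $(H_{1})$ and produces a Gronwall-type feedback $\int_{0}^{t}(t-s)^{2(\beta-1)}\mathbb{E}\sup_{r\le s}|X_{\epsilon}(r)-Z_{\epsilon}(r)|^{2}\,ds$; the second bracket is an averaging error controlled by $(H_{3})$ together with the linear-growth bound $(H_{2})$, which yields an a priori $L^{2}$ estimate $\sup_{t\le u}\mathbb{E}|Z_{\epsilon}(t)|^{2}\le C(1+|X_{0}|^{2})$ on the horizon considered. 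For the drift I would use Cauchy–Schwarz against the kernel $(t-s)^{\beta-1}$, using $\beta>\tfrac12$ so that $(t-s)^{2(\beta-1)}$ is integrable; for the Brownian piece I would apply the Burkholder–Davis–Gundy inequality combined with the Itô isometry and the bound on $a(t,x_{1},x_{1})$; and for the compensated-Poisson piece I would apply Kunita's first inequality (or a BDG-type estimate for jump martingales) together with the $\nu$-integral bound in $(H_{1})$ and $(H_{3})$.

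Collecting the three pieces gives a bound of the form
\begin{equation*}
\mathbb{E}\!\sup_{s\le t}|X_{\epsilon}(s)-Z_{\epsilon}(s)|^{2}
\le C_{1}\,\epsilon^{\,2}\,u^{\,2\beta-1}\,\psi(u)
+C_{2}\,\epsilon\,u^{\,2\beta-1}\,\psi(u)
+C_{3}\,(\epsilon^{\,2}+\epsilon)\!\int_{0}^{t}(t-s)^{2\beta-2}\mathbb{E}\!\sup_{r\le s}|X_{\epsilon}(r)-Z_{\epsilon}(r)|^{2}\,ds,
\end{equation*}
where $\psi(u)=\sup_{T_{1}\le u}(\alpha_{1}(T_{1})+\alpha_{2}(T_{1})+\alpha_{3}(T_{1}))^{2}$ tends to zero as $u\to\infty$ by $(H_{3})$. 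A singular (fractional) Gronwall inequality then absorbs the last integral and leaves an estimate
\begin{equation*}
\mathbb{E}\!\sup_{t\le u}|X_{\epsilon}(t)-Z_{\epsilon}(t)|^{2}\le C\bigl(\epsilon\,u^{\,2\beta-1}+\epsilon^{\,2}u^{\,2\beta-1}\bigr)\psi(u)\,E_{2\beta-1}(C'\epsilon\,u^{\,2\beta-1}),
\end{equation*}
where the Mittag-Leffler-type factor is uniformly bounded on the horizon of interest. Setting $u=L\epsilon^{-\lambda}$, the prefactor becomes of order $\epsilon^{\,1-\lambda(2\beta-1)}\psi(L\epsilon^{-\lambda})$; choosing $\lambda\in(0,1/(2\beta-1))\cap(0,1)$ makes the power of $\epsilon$ strictly positive while $\psi(L\epsilon^{-\lambda})\to0$ as $\epsilon\downarrow0$, so picking first $L$ large and then $\epsilon_{1}$ small produces the desired bound by $\delta_{1}$.

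The main obstacle, and the delicate point that distinguishes the fractional case from Xu et al.'s integer-order argument, is the interaction between the singular kernel $(t-s)^{\beta-1}$ and the long time horizon $L\epsilon^{-\lambda}$: the time-averaging gain comes from long $t$ while the kernel contributes an extra factor $u^{2\beta-1}$ that must not overwhelm the $\epsilon$-smallness. Making the fractional Gronwall step rigorous for both stochastic integrals simultaneously, and verifying that the admissible window $\lambda\in(0,1/(2\beta-1))$ is nonempty for all $\beta\in(\tfrac12,1)$, is where the bulk of the technical care has to go; everything else reduces to standard BDG/Kunita estimates plus the hypotheses $(H_{1})$–$(H_{3})$.
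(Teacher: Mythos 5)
Your proposal follows essentially the same route as the paper: the same mild-solution difference, the same three-way splitting with $|a+b+c|^{2}\le 3(|a|^{2}+|b|^{2}+|c|^{2})$, the same add-and-subtract of the frozen averaged coefficients, Cauchy--Schwarz/Doob(BDG)/It\^o-isometry estimates on the three pieces, a fractional Gronwall inequality producing a Mittag-Leffler series, and the choice $u=L\epsilon^{-\lambda}$. One correction: your claim that $\psi(u)=\sup_{T_{1}\le u}\bigl(\alpha_{1}(T_{1})+\alpha_{2}(T_{1})+\alpha_{3}(T_{1})\bigr)^{2}$ tends to zero as $u\to\infty$ is false --- a supremum over a growing interval is non-decreasing, and $(H_{3})$ gives decay of $\alpha_{i}(T_{1})$ at infinity, not of its running supremum; the paper likewise only uses boundedness of these constants. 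Fortunately this is not load-bearing: as in the paper, the smallness comes entirely from the positive power $\epsilon^{1-\lambda(2\beta-1)}$ (and the admissible window $(0,1/(2\beta-1))\cap(0,1)$ you worry about is automatically all of $(0,1)$, since $2\beta-1<1$ for $\beta\in(\tfrac12,1)$), so your argument still closes.
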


\begin{proof}
{\rm
Take a value from $\left(0, T\right]$ and define as $u$.
Then for any $t\in \left[0, u\right]$
\begin{equation*}
\begin{split}
&X_{\epsilon}\left(t\right)-Z_{\epsilon}\left(t\right)\\
=&\frac{\epsilon}{\Gamma(\beta)}\int^{t}_{0}\left(t-s\right)^{\beta-1}
      \left[ f\left(s,X_{\epsilon}\left(s-\right)\right)-\overline{f}\left(Z_{\epsilon}\left(s-\right)\right) \right] ds+\\
     &\frac{\sqrt{\epsilon}}{\Gamma(\beta)}\int^{t}_{0}\left(t-s\right)^{\beta-1}\left[G\left(s,X_{\epsilon}\left(s-\right)\right)-\overline{G}\left(Z_{\epsilon}\left(s-\right)\right)\right]dB(s)+\\
     &\frac{\sqrt{\epsilon}}{\Gamma(\beta)}\int^{t}_{0}\left(t-s\right)^{\beta-1}
                \int_{\mid x\mid<c}\left[H\left(s,Z_{\epsilon}\left(s-\right),x\right)-\overline{H}\left(Z_{\epsilon}\left(s-\right),x\right)\right]\widetilde{N}(ds,dx).
\end{split}
\end{equation*}
Computing the mathematical expectation, we have
\begin{equation}\label{6}
\begin{split}
&\mathbb{E} (\underset{0\le t\le u}{\mathop{\sup }}\
\left| X_{\epsilon}\left(t\right)-Z_{\epsilon}\left(t\right) \right|^{2} )\\
    \leq
    &\frac{3\epsilon^{2}}{\Gamma(\beta)^{2}}
    \mathbb{E} \underset{0\le t\le u}{\mathop{\sup}}\
      \left|\int^{t}_{0}\left(t-s\right)^{\beta-1}
      \left[f\left(s,X_{\epsilon}\left(s-\right)\right)-\overline{f}\left(Z_{\epsilon}\left(s-\right)\right) \right] ds \right|^{2}+\\
    &\frac{3\epsilon}{\Gamma(\beta)^{2}}
     \mathbb{E}\underset{0\le t\le u}{\mathop{\sup}}\ \left|\int^{t}_{0}\left(t-s\right)^{\beta-1}\left[ G\left(s,X_{\epsilon}\left(s-\right)\right)
      -\overline{G}\left(Z_{\epsilon}\left(s-\right)\right)\right]dB(s) \right|^{2}+\\
    &\frac{3\epsilon}{\Gamma(\beta)^{2}}
     \mathbb{E}\underset{0\le t\le u}{\mathop{\sup}}\
     \left| \int^{t}_{0}\left(t-s\right)^{\beta-1}
     \int_{\mid x\mid<c}\left[H\left(s,X_{\epsilon}\left(s-\right),x\right)-\overline{H}\left(Z_{\epsilon}\left(s-\right),x\right)\right]\widetilde{N}(ds,dx)\right|^{2}\\
     =&J_{1}+J_{2}+J_{3}.
\end{split}
\end{equation}
Evaluating $J_{1}$ with the technic of integration by parts and conditions $\left(H_{1}\right)$-$\left(H_{3}\right)$, produces
\begin{equation}\label{7}
\begin{split}
J_{1}
\leq    &\frac{6\epsilon^{2}}{\Gamma(\beta)^{2}} \mathbb{E} \underset{0\le t\le u}{\mathop{\sup}}\
        \left|\int^{t}_{0}\left(t-s\right)^{\beta-1}
        \left[ f\left(s,X_{\epsilon}\left(s-\right)\right)-f\left(s,Z_{\epsilon}\left(s-\right)\right)\right] ds
        \right|^{2}\\
        &+\frac{6\epsilon^{2}}{\Gamma(\beta)^{2}} \mathbb{E} \underset{0\le t\le u}{\mathop{\sup}}\
        \left|\int^{t}_{0}\left(t-s\right)^{\beta-1}
        \left[ f\left(s,Z_{\epsilon}\left(s-\right)\right)-\overline{f}\left(Z_{\epsilon}\left(s-\right)\right)\right] ds
        \right|^{2}\\
\leq    &\frac{6\epsilon^{2}u}{\Gamma(\beta)^{2}}
          \mathbb{E}\underset{0\le t\le u}{\mathop{\sup}}\
           \left(\int^{t}_{0}\left(t-s\right)^{2\beta-2}\left| f\left(s,X_{\epsilon}\left(s-\right)\right)
        -f\left(s,Z_{\epsilon}\left(s-\right)\right) \right|^{2}ds\right) \\
        &+\frac{6\epsilon^{2}}{\Gamma(\beta)^{2}}\mathbb{E}\underset{0\le t\le u}{\mathop{\sup}}\
        \left|\int^{t}_{0}
        \left[  f\left(s,Z_{\epsilon}\left(s\right)\right)
        -\overline{f}\left(Z_{\epsilon}\left(s\right)\right) \right]
        d\frac{-\left(t-s\right)^{\beta}}{\beta}
         \right|^{2}\\
\leq    &K_{11}\epsilon^{2}u \int^{u}_{0}\left(u-s\right)^{2\beta-2} \mathbb{E}(\underset{0\le s_{1}\le s}{\mathop{\sup}}\
        \left| X_{\epsilon}\left(s_{1}\right)-Z_{\epsilon}\left(s_{1}\right) \right|^{2})ds+
         K_{12} \epsilon^{2}u^{2\beta},
\end{split}
\end{equation}
where $K_{11}=\frac{6C_{1}^{2}}{\Gamma(\beta)^{2}} $ and $K_{12}=\frac{12}{\beta^{2}\Gamma(\beta)^{2}}
         \underset{0\le t\le u}{\mathop{\sup}}\  \alpha_{1}\left(t\right)^{2}
        [1+\mathbb{E}(\underset{0\le \tau\le u}{\mathop{\sup }}\ \left| Z_{\epsilon}(\tau)\right|^{2}) ]$.

Similarly, for $J_{2}$,
\begin{equation*}
\begin{split}
J_{2}
\leq &   \frac{6\epsilon}{\Gamma(\beta)^{2}}\mathbb{E}\underset{0\le t\le u}{\mathop{\sup}}\
        \left|\int^{t}_{0}\left(t-s\right)^{\beta-1}\left[ G\left(s,X_{\epsilon}\left(s\right)\right)
      -G\left(s,Z_{\epsilon}\left(s\right)\right) \right]dB(s) \right|^{2}+\\
     &  \frac{6\epsilon}{\Gamma(\beta)^{2}}\mathbb{E}\underset{0\le t\le u}{\mathop{\sup}}\
        \left|\int^{t}_{0}\left(t-s\right)^{\beta-1}\left[ G\left(s,Z_{\epsilon}\left(s\right)\right)
      -\overline{G}\left(Z_{\epsilon}\left(s\right)\right) \right]dB(s) \right|^{2},\\
\end{split}
\end{equation*}
adding Doob's martingale inequality and It$\hat{o}$ isometry,
\begin{equation}\label{8}
\begin{split}
J_{2}
\leq  &K_{21}\epsilon \int^{u}_{0}\left(u-s\right)^{2\beta-2}
       \mathbb{E}(\underset{0\le s_{1}\le s}{\mathop{\sup}}\
        \left| X_{\epsilon}\left(s_{1}\right)-Z_{\epsilon}\left(s_{1}\right) \right|^{2})ds + \\
      &\frac{6\epsilon}{\Gamma(\beta)^{2}}
          \mathbb{E}\underset{0\le t\le u}{\mathop{\sup}}\
       \int^{t}_{0} \left\|a\left(s,Z_{\epsilon}\left(s\right)\right)
        -\overline{a}\left(Z_{\epsilon}\left(s\right)\right)\right\|
        d \left[ \frac{-\left(t-s\right)^{2\beta-1}}{2\beta-1} \right]
        \\
\leq  &K_{21}\epsilon \int^{u}_{0}\left(u-s\right)^{2\beta-2}
       \mathbb{E}(\underset{0\le s_{1}\le s}{\mathop{\sup}}\
        \left| X_{\epsilon}\left(s_{1}\right)-Z_{\epsilon}\left(s_{1}\right) \right|^{2})ds +
         K_{22} \epsilon u^{2\beta-1},\\
\end{split}
\end{equation}
where $K_{21}=\frac{6C_{1}^{2}}{\Gamma(\beta)^{2}}$ and $K_{22}=\frac{6}{(2\beta-1)\Gamma(\beta)^{2}}
         \underset{0\le t\le u}{\mathop{\sup}}\  \alpha_{2}\left(t\right)
        [1+\mathbb{E}(\underset{0\le \tau\le u}{\mathop{\sup }}\ \left| Z_{\epsilon}(\tau)\right|^{2}) ]$.

In the sequel, for $J_{3}$,
\begin{equation*}
\begin{split}
J_{3}
\leq   &\frac{6\epsilon}{\Gamma(\beta)^{2}}
       \mathbb{E}
       \left( \int^{u}_{0}\left(u-s\right)^{2\beta-2}
      \int_{\mid x\mid<c}\left|H\left(s,X_{\epsilon}\left(s-\right),x\right)-
       H\left(s,Z_{\epsilon}\left(s-\right),x\right)\right|^{2}\nu(dx)ds\right)\\
     &+\frac{6\epsilon}{\Gamma(\beta)^{2}}
       \mathbb{E}
       \left( \int^{u}_{0}\left(u-s\right)^{2\beta-2}
      \int_{\mid x\mid<c}\left|H\left(s,X_{\epsilon}\left(s-\right),x\right)-
       \overline{H}\left(Z_{\epsilon}\left(s-\right),x\right)\right|^{2}\nu(dx)ds\right),\\
\end{split}
\end{equation*}
noting the assumptions and formula used above,
\begin{equation}\label{9}
\begin{split}
J_{3}
\leq   &K_{31}\epsilon \int^{u}_{0}\left(u-s\right)^{2\beta-2}
        \mathbb{E}(\underset{0\le s_{1}\le s}{\mathop{\sup}}\
         \left| X_{\epsilon}\left(s_{1}\right)-Z_{\epsilon}\left(s_{1}\right) \right|^{2})ds +\\
       &\frac{6\epsilon}{\Gamma(\beta)^{2}}
       \mathbb{E}
        \int^{u}_{0}
      \int_{\mid x\mid<c}\left|H\left(s,X_{\epsilon}\left(s-\right),x\right)-
       \overline{H}\left(Z_{\epsilon}\left(s-\right),x\right)\right|^{2}\nu(dx)
       d\left(\frac{-\left(u-s\right)^{2\beta-1}}{2\beta-1}\right)\\
\leq   &K_{31}\epsilon \int^{u}_{0}\left(u-s\right)^{2\beta-2}
        \mathbb{E}(\underset{0\le s_{1}\le s}{\mathop{\sup}}\
         \left| X_{\epsilon}\left(s_{1}\right)-Z_{\epsilon}\left(s_{1}\right) \right|^{2})ds +
          K_{32} \epsilon u^{2\beta-1},
\end{split}
\end{equation}
where $K_{31}=\frac{6C_{1}^{2}}{\Gamma(\beta)^{2}}$ and $K_{32}=\frac{6}{(2\beta-1)\Gamma(\beta)^{2}}
         \underset{0\le t\le u}{\mathop{\sup}}\  \alpha_{3}\left(t\right)
        [1+\mathbb{E}(\underset{0\le \tau\le u}{\mathop{\sup }}\ \left| Z_{\epsilon}(\tau)\right|^{2}) ]$.

Hence, we get

\begin{equation*}
\begin{split}
&\mathbb{E} (\underset{0\le t\le u}{\mathop{\sup }}\ \left| X_{\epsilon}\left(t\right)-Z_{\epsilon}\left(t\right) \right|^{2} )\\
\leq
&K_{12}\epsilon^{2}u^{2\beta} +
         (K_{22}+K_{32})\epsilon u^{2\beta-1}+
        \left (K_{11}\epsilon^{2}u+K_{21}\epsilon+K_{31}\epsilon \right)  \\ & \int^{u}_{0}\left(u-s\right)^{(2\beta-1)-1} \mathbb{E}(\underset{0\le s_{1}\le s}{\mathop{\sup}}\
        \left| X_{\epsilon}\left(s_{1}\right)-Z_{\epsilon}\left(s_{1}\right) \right|^{2})ds,\\
\end{split}
\end{equation*}
moreover \cite{wq16, hy17},
\begin{equation*}
\begin{split}
&\mathbb{E} (\underset{0\le t\le u}{\mathop{\sup }}\ \left| X_{\epsilon}\left(t\right)-Z_{\epsilon}\left(t\right) \right|^{2} )\\
\leq    &\left(K_{12}\epsilon^{2}u^{2\beta} + (K_{22}+K_{32})\epsilon u^{2\beta-1} \right) \\&
         \sum_{k=0}^{\infty}\frac{ \left[ \left (K_{11}\epsilon^{2}u^{1+\beta}+(K_{21}+K_{31})\epsilon u^{\beta} \right)\Gamma\left(\beta\right) \right]^{k} }
         {\Gamma\left(k\beta+1\right)}.  \\
\end{split}
\end{equation*}
Thus, we can find $L>0$ and $\lambda\in\left( 0,1 \right)$ such that for every $t\in\left(0, L\epsilon^{-\lambda}\right]\subseteq\left[0,T\right]$ having
\begin{equation*}
\begin{split}
&\mathbb{E} (\underset{0\le t\le L\epsilon^{-\lambda}}{\mathop{\sup }}\ \left| X_{\epsilon}\left(t\right)-Z_{\epsilon}\left(t\right) \right|^{2} )\leq C\epsilon^{1-\lambda},
\end{split}
\end{equation*}
where
\begin{equation*}
\begin{split}
C=      &\left(K_{12}L^{2\beta}\epsilon^{1+\lambda-2\beta\lambda} + (K_{22}+K_{32})L^{2\beta-1}\epsilon^{2\lambda(1-\beta)} \right)\\
        & \sum_{k=0}^{\infty}\frac{ \left[ \left (K_{11}L^{1+\beta}\epsilon^{2-\lambda-\beta\lambda}+(K_{21}+K_{31})L^{\beta}\epsilon ^{1-\beta\lambda} \right)\Gamma\left(\beta\right) \right]^{k} }
         {\Gamma\left(k\beta+1\right)}\\
\end{split}
\end{equation*}
is a constant.

}
\end{proof}

\section{Example}
In this section, we reduce a Caputo type fractional stochastic equations into simpler form to illustrate our main result.

Consider
\begin{equation}\label{10}
\left\{
   \begin{aligned}
D_{t}^{\beta}X_{\epsilon}\left(t\right)
=   &2\epsilon X_{\epsilon}cos^{2}\left(t\right) +
      \sqrt{\epsilon} \frac{dB\left(t\right)}{dt} +
       \sqrt{\epsilon}\int_{\mid x\mid<c}2x^{4}sin\left(t\right)^{2}X_{\epsilon}\left(t\right)\nu_{\alpha}(dx),\\
 X_{\epsilon}\left(0\right)=&0.1,
   \end{aligned}
\right.
\end{equation}
where $\beta\in(\frac{1}{2},1)$, $\alpha$-stable L\'{e}vy jump measure $\nu_{\alpha}(dx)=\frac{\gamma}{x^{1+\alpha}}dx$, constants $\gamma>0$, $\alpha\in(0,2)$.

Together with the averaged system
\begin{equation}\label{11}
   \begin{aligned}
 D_{t}^{\beta}Z_{\epsilon}\left(t\right) &=\epsilon Z_{\epsilon}(1+\gamma_{1}) +
                  \sqrt{\epsilon}\frac{dB\left(t\right)}{dt},
                   ~Z_{\epsilon}\left(0\right)=0.1,
                   ~\gamma_{1}=\frac{\gamma c^{4-\alpha}}{\sqrt{\epsilon}(4-\alpha)},
   \end{aligned}
\end{equation}
define
\begin{equation*}
Er=\left[ \left| X_{\epsilon}\left(t\right)-Z_{\epsilon}\left(t\right) \right|^{2} \right]^{\frac{1}{2}},
\end{equation*}
we numerically compare solution paths for equations (\ref{10}) and (\ref{11}) in Fig. 1.


%
%

\begin{center}
(a)\includegraphics[height=4cm,angle=0,width=5cm]{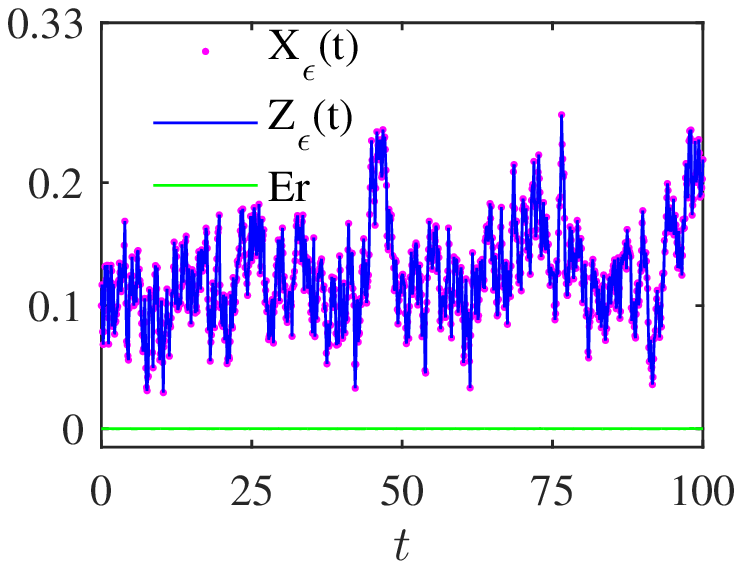}
(b)\includegraphics[height=4cm,angle=0,width=5cm]{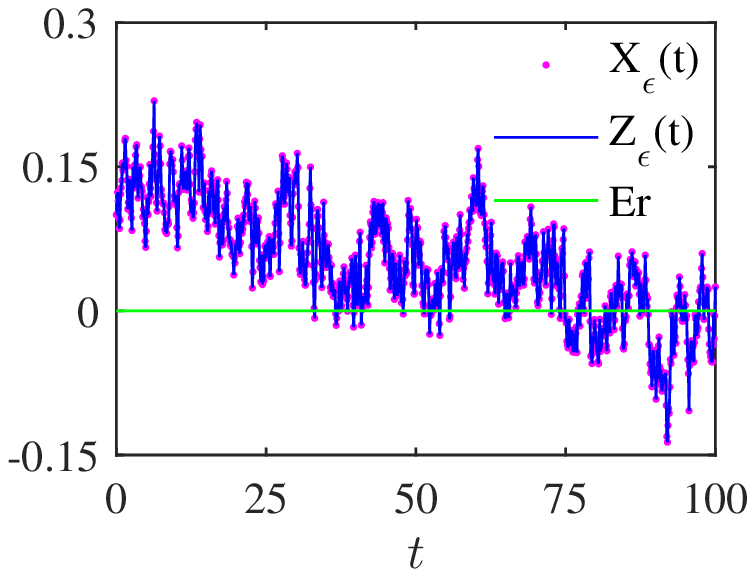}\\
(c)\includegraphics[height=4cm,angle=0,width=5cm]{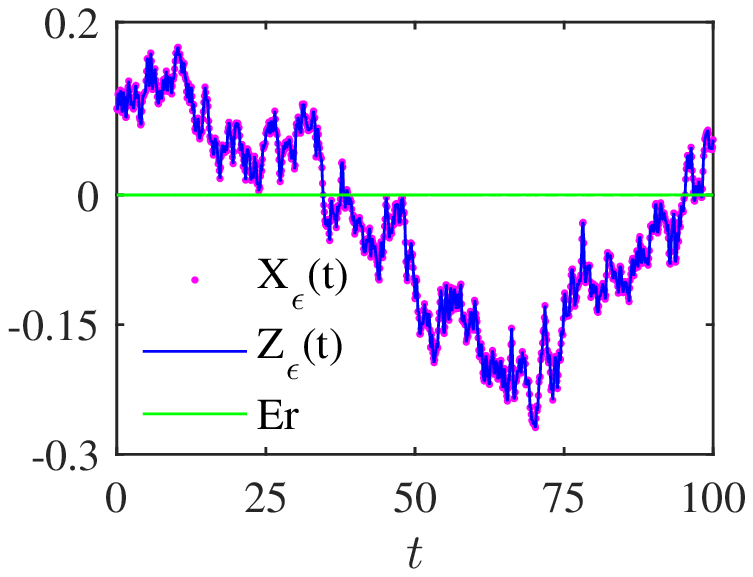}
(d)\includegraphics[height=4cm,angle=0,width=5cm]{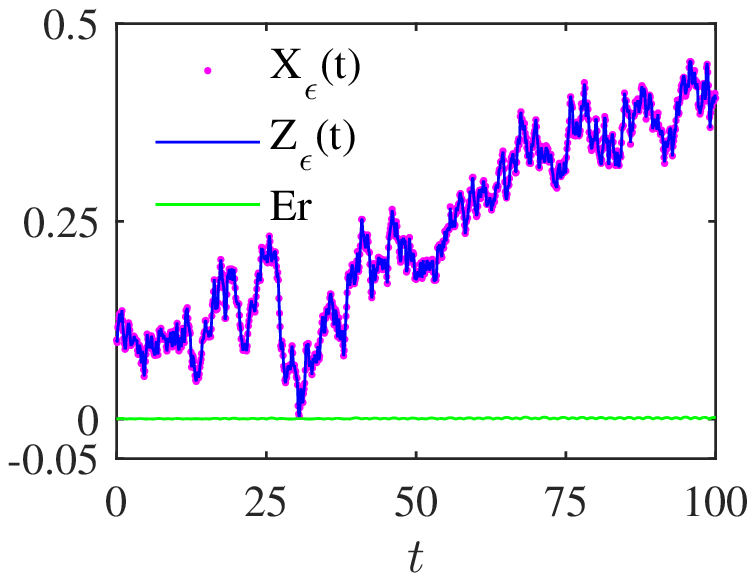}

 \bigskip
 \parbox[c]{12.5cm}{
Fig. 1. Solution paths of equations (\ref{10}) and (\ref{11}) when $\epsilon=0.001$, $c=0.5$:
 (a) $\beta=0.6$, $\alpha=0.3$, $\gamma=3$,
 (b) $\beta=0.6$, $\alpha=1.1$, $\gamma=0.6$,
 (c) $\beta=0.85$, $\alpha=0.3$, $\gamma=0.6$,
 (d) $\beta=0.85$, $\alpha=1.9$, $\gamma=3$.}
\end{center}

Note that a good agreement is demonstrated.

%

\section{Conclusion}
In this paper, we have proved an averaging principle for the Caputo type fractional stochastic equations with non-Gaussian L\'{e}vy motion.
It provides an effective stochastic approximation of solutions of fractional stochastic dynamical systems.

 As fractional stochastic differential equations arise as models for a variety of complex systems under non-Gaussian random influences, our method for fractional averaging will   be beneficial in extracting effective dynamical behaviors of such systems.


\section*{Acknowledgments}
This work was supported by the National Natural Science Foundation of China (No.11872305, 11532011) and China Scholarship Council (No.201906290182).

\section*{Data availability Statement}
The data that support the findings of this study are openly available in GitHub \cite{wjx28}.

\bibliographystyle{model1a-num-names}
\bibliography{<your-bib-database>}

\end{document}